\newtheorem{theorem}{Theorem}[section]
\newtheorem{corollary}{Corollary}
\newtheorem{lemma}[theorem]{Lemma}
\newtheorem{proposition}{Proposition}
\theoremstyle{definition}
\newcommand{\memo}[1]{}
\newcommand{\hs}[1]{\hspace{#1mm}}
\newcommand{\nt}{\notag}
\newcommand{\pd}{\partial}
\newcommand{\pr}{\prime}
\newcommand{\mcal}[1]{{\mathcal{#1}}}
\newcommand{\mbb}[1]{{\mathbb{#1}}}
\newcommand{\vl}[1]{\mbox{\boldmath $#1$}}
\def\p{p}
\def\a{\lambda}
\def\topS{{\!\top\!}}
\def\org{0}
\def\CD{\!\phantom{\ }_\org D^\alpha_t}
\newcommand{\CDg}[2]{\!\phantom{\ }D^{#2}_{#1}}
\newcommand{\CDh}[3]{\!\phantom{\ }_{#1}^{\phantom{1}} D^{#3}_{\vphantom{1}#2}}
\def\ML{E}
\def\Re{\mathrm{Re}}
\def\fh{f}
\newcommand{\Sp}{\vl{\varsigma}}
\newenvironment{proof}[1][\proofname]{\par
  \normalfont
  \topsep0\p@\@plus6\p@ \trivlist
  \item[\hskip\labelsep{\itshape #1}\@addpunct{\itshape :}]\ignorespaces
}{%
  \endtrivlist
}
\newcommand{\proofname}{Proof}
\title{Hamilton-Jacobi-Bellman equation of\\
 nonlinear optimal control problems\\ with fractional discount rate} 
\author{{Gou Nishida}\thanks{Department of Electrical and Electronic Engineering, College of Engineering, Nihon University, 1 Nakagawara, Tokusada, Tamura, Koriyama, Fukushima, 963-8642, JAPAN. (e-mail: g.nishida@ieee.org)}
\thanks{The authors acknowledge the support of JSPS KAKENHI Grant Number 22H01428.} 
\and
{Takamatsu Takahiro}\thanks{Fukushima Renewable Energy Institute, National Institute of Advanced Industrial Science and Technology (FREA), 2-2-9, Machiikedai, Koriyama, Fukushima 963-0298, JAPAN. (e-mail: takamatsu.1849@aist.go.jp)} 
\and
{Noboru Sakamoto}\thanks{Department of Mechatronics, Faculty of Science and Engineering, Nanzan University, 18 Yamazato-cho, Shyowa-ku, Nagoya, 466-8673, JAPAN. (e-mail: noboru.sakamoto@nanzan-u.ac.jp)} 
}
\date{11/Nov/2022}
\begin{document}
\maketitle

\begin{abstract}                
 This paper derives the Hamilton-Jacobi-Bellman equation of nonlinear optimal control problems for cost functions with fractional discount rate from the Bellman's principle of optimality. The fractional discount rate is described by Mittag-Leffler function that can be considered as a generalized exponential function.
\end{abstract}


\section{Introduction}

The Hamilton-Jacobi-Bellman (HJB) equation in optimal control theory yields
a necessary and sufficient condition for the optimality of controls with respect to cost functions~\citep{And:90}.
The optimal controls are obtained from the HJB equation as a minimizer of the Hamiltonian included in the equation.
The cost functions can be variously chosen for each control problem.
The discounted cost is defined by multiplying an exponential function to an integrand of a cost function.
The exponential function affects evaluations to increase or decrease weighting as time progresses.

In this paper, the Hamilton-Jacobi-Bellman equation of nonlinear optimal control problems 
for an extended discounted cost function using a fractional discount rate described by Mittag-Leffler function~\citep{Pod:99,Gor:20}
is derived from the Bellman's principle of optimality~\citep{Bel:21}.

\section{Preliminaries}

This section summarizes the basic definition and derivation of the infinite horizon optimal control problem with discounted cost.


\subsection{Optimal problem with discount rate}

Consider the infinite horizon optimal control problem for the system
\begin{align}
 \dot{x}(t) = \fh(x(t), u(t), t)
\end{align}
with the cost function
\begin{align}
 J = \int_{t}^{\infty} {e^{\a(\tau-t)}} L(x(\tau), u(\tau), \tau)\,d\tau \label{Cost}
\end{align}
for some time $t$ such that $t_0 \leq t \leq \infty$, where $\a \in \mbb{R}$, $x=x(t) \in \mbb{R}^n$ is the state, $u=u(t) \in \mbb{R}^m$ is the control, $f\colon M \to \mbb{R}^n$ is a smooth mapping, and $M$ is an $n$-dimensional manifold.
If $\lambda > 0$, then the cost is expanded as time progresses, and designed controls work 
in such a way as to suppress states within a short time.
If $\lambda < 0$, prospective costs do not so much take into account with distance from an initial time.

This problem can be formulated by dynamic programming as follows:
{
\begin{align}
 &V(x,t) = \inf_{u[t,\infty]} J \notag \\
 &= \inf_{u[t,\infty]} \left( \int_{t}^{t^\pr} {e^{\a(\tau-t)}} L(x(\tau), u(\tau), \tau)\,d\tau \right. \notag \\
 &\qquad \left. + \int_{t^\pr}^{\infty} {e^{\a(\tau-t)}} L(x(\tau), u(\tau), \tau)\,d\tau \right) \notag \\
 &= \inf_{u[t,t^\pr]} \left\{ \int_{t}^{t^\pr} {e^{\a(\tau-t)}} L(x(\tau), u(\tau), \tau)\,d\tau \right. \notag \\
 &\qquad \left. + \inf_{u[t^\pr,\infty]} \left( \int_{t^\pr}^{\infty} {e^{\a(\tau-t)}} L(x(\tau), u(\tau), \tau)\,d\tau \right) \right\} \notag \\
%
%
 &= \inf_{u[t,t^\pr]} \left( \int_{t}^{t^\pr} {e^{\a(\tau-t)}} L(x(\tau), u(\tau), \tau)\,d\tau \right. \notag \\
 &\qquad \left. +\, {e^{\a dt}} V(x(t^\pr),t^\pr) \right), \label{eq3}
\end{align}
where we have used $t^\pr = t + dt$ in the last equation.
For an infinitesimal time $dt > 0$, the above $V$ called a value function can be written as follows:
} 
\begin{align}
 &V(x,t) \nt \\ 
 &\ = \inf_{u[t,t+dt]} \left( L(x, u, t)\,dt + {e^{\a dt}} V(x+ \fh dt,t+dt) \right), \label{eq4}
\end{align}
where we have assumed that $V$ called a value function is smooth.
From the first order term of the Taylor series expansion of (\ref{eq4}):
\begin{align}
 &V(x+ \fh dt,t+dt) \notag \\
 &\quad = V(x,t) + \dfrac{\partial V}{\partial x}(x,t) \fh dt + \dfrac{\partial V}{\partial t}(x,t)dt + \mathcal{O}(dt^2), \\
 &{e^{\a dt} = 1 + \a dt + \mathcal{O}(dt^2)},
\end{align}
 we obtain
\begin{align}
 {-\a V(x,t)} - \dfrac{\partial V}{\partial t}(x,t) &= \inf_{u} \left( L(x, u, t) + \dfrac{\partial V}{\partial x}(x,t) \fh \right) \nt \\
 &= \inf_{u} H(x, u, \p, t), \label{HJ0}
\end{align}
where we have defined 
\begin{align}
 H(x, u, \p, t) = L(x, u, t)  + \dfrac{\partial V}{\partial x}(x,t) \fh \label{preH}
\end{align}
for $\p = (\partial V/ \partial x)^\top(x,t)$.
The equation (\ref{HJ0}) is the Hamilton-Jacobi-Bellman equation with discounted cost that is the necessary condition of the optimal problem.
The term $-\a V(x,t)$ does not appears in the standard optimal control problem setting.

\memo{
The sufficient condition, i.e., the fact that the optimal control and the value function are given if the Hamilton-Jacobi equation is solvable can be also proven.
Indeed, for an optimal control input $u_{\textrm{opt}}$ that minimize the right-side of (\ref{HJ0}) and any input $u$, the relation
\begin{align}
 H(x, u, \p, t) &\geq H(x, u_{\textrm{opt}}, \p, t) \nt \\
 &= {-\a V(x,t)} - \dfrac{\partial V}{\partial t}(x,t)
\end{align}
holds. By using (\ref{preH}), we get
\begin{align}
  L(x, u, t) &\geq {-\a V(x,t)} - \dfrac{\partial V}{\partial x}(x,t)\fh - \dfrac{\partial V}{\partial t}(x,t) \notag \\
 &= {-\a V(x,t)} - \dfrac{d V}{dt}(x,t).
\end{align}
Thus, 
\begin{align}
 J &= \int_{t_0}^{\infty} {e^{\a(t-t_0)}} L(x(t), u(t), t)\,dt \notag \\
 &\geq \int_{t_0}^{\infty} {e^{\a(t-t_0)}} \left( {-\a V(x,t)} - \dfrac{d V}{dt}(x,t) \right)\,dt \notag \\
 &= - \int_{t_0}^{\infty} \dfrac{d}{dt} \left({e^{\a(t-t_0)}} V(x,t)\right)\,dt \notag \\ 
 &= V(x(t_0),t_0),
\end{align}
which is the minimum of the cost function.
} 

\memo{
\subsection{Hamilton-Jacobi equations for optimal regulator designs}

In infinite horizon optimal control problems, the value function does not explicitly depend on $t$, i.e., ${\partial V}/{\partial t} = 0$ because of the stationarity of the system: $V(x,t) = V(x,t+dt)$ with respect to any $dt$. 
However, ${\partial V}/{\partial t}$ remains in (\ref{HJ0}). 

First, consider the input affine system
\begin{align}
 \dot{x}(t) = f(x(t)) + g(x(t))u(t) \label{Nsys}
\end{align}
 and the objective function
\begin{align}
 &J = \int_0^\infty L(x(\tau), u(\tau), \tau)\, d\tau, \label{Cost0} \\
 &L(x, u, t) = q(x) + \dfrac{1}{2} u^\top R u,
\end{align}
where $g\colon M \to \mbb{R}^{n \times m}$ is a smooth mapping, $q\colon M \to \mbb{R}$ is a non-negative function and the weight $R\colon M \to \mbb{R}^{m \times m}$ is a symmetric matrix.
The optimal regulator of the above problem:
\begin{align}
 u_{\textrm{opt}} = - R^{-1} g^\top p
\end{align}
can be derived from the condition
\begin{align}
 \dfrac{\partial H}{\partial u} &= u^\top R + p^\top g = 0,
\end{align}
where we have defined the pre-Hamiltonian
\begin{align}
 H(x, u, p, t) &= q(x) + \dfrac{1}{2} u^\top R u + p(x,t)^\top \left( f + g u \right).
\end{align}

Next, as we have seen in the previous discussion, in the case of the discounted cost
\begin{align}
 J = \int_0^\infty e^{\a \tau}L(x(\tau), u(\tau), \tau)\, d\tau \label{Cost1}
\end{align}
for the input affine system (\ref{Nsys}), the following Hamilton-Jacobi equation is obtained from (\ref{HJ0}):
\begin{align}
 H(x,u_{\textrm{opt}},p,t) &= {-\a V} - \dfrac{\partial V}{\partial t} 
 = p^\topS f - \dfrac{1}{2} p K p^\topS + q. \label{HJEdr}
\end{align}
\memo{
where 
\begin{align}
 H(x, u, p, t) &= L(x, u, t) + p^\top \left( f + g u \right), \label{HamDC} \\
 L(x, u, t) &= q(x) + \dfrac{1}{2} u^\top R u.
\end{align}
} 

} 

\section{Main results}


\subsection{Motivation and summary of results}

In (\ref{eq3}), the relation $e^{\a(\tau-t^\pr+dt)} = e^{\a(\tau-t^\pr)}e^{\a dt}$ has been used.
This relation is called the semigroup property of exponential functions.
In this paper, we attempt to use the Mittag-Leffler function 
$\ML_\alpha (\a(\tau-t^\pr+dt)^\alpha)$ instead of the exponential function in the optimal control problem.
The Mittag-Leffler function can be considered as a generalized exponential function.
For instance, $\ML_0(z) = 1/(1-z)$ for $|z|<1$, $\ML_1(z) = e^z$, $\ML_2(z) = \cosh(\sqrt{z})$ for $z \in \mbb{C}$, 
\memo{
$\ML_2(-z^2) = \cos(z)$ for $z \in \mbb{C}$, 
$\ML_3(z) = (1/2)[e^{z^{1/3}} + 2 e^{-(1/2) z^{1/3}}\cos(\sqrt{3} z^{1/3}/2)]$ for $z \in \mbb{C}$, 
$\ML_4(z) = (1/2)[\cos(z^{1/4}) + \cosh(z^{1/4})]$ for $z \in \mbb{C}$, and 
$\ML_{1/2}(\pm z^{1/2}) = e^z \mathrm{erfc} (\mp z^{1/2})$ for $z \in \mbb{C}$, 
} 
where $\mbb{C}$ is the set of complex numbers, and $\mathrm{erfc}$ is the error function.



\memo{
\subsection{Mittag-Leffler functions}

On the other hand, the solution of the fractional ordinary differential equation
\begin{align}
 \CD x(t) = \lambda x(t) + u(t),
\end{align}
for $t > 0$ and $x(0) = x_0$, is given uniquely by
\begin{align}
 x(t) &= \ML_\alpha(\lambda t^\alpha) x_0 \nt \\
&\quad + \int_0^t (t-\tau)^{\alpha -1} \ML_{\alpha, \alpha} [\lambda (t-\tau)^\alpha] u(\tau) d\tau,
\end{align}
where $\lambda \in \mbb{C}$, $u$ is a complex function defined in $(0, \infty)$, $x: [0, \infty) \to \mbb{C}$ is a continuous function, 
$\CD$ is the Caputo fractional time derivative of order $\alpha$ with $0 < \alpha < 1$
\begin{align}
 \CD x(t) = \dfrac{1}{\Gamma(1-\alpha)} \int_0^t (t - \tau)^{-\alpha} \dfrac{dx}{d\tau}(\tau)\,d\tau, \label{eqC}
\end{align}
} 

The Mittag-Leffler function $\ML_\alpha$ is defined by
\begin{align}
 \ML_\alpha(z) = \sum_{n=0}^\infty \dfrac{z^n}{\Gamma(\alpha n + 1)}
\end{align}
for $z \in \mbb{C}$ and $\alpha \geq 0$, where 
$\Gamma$ is the Gamma function defined by 
\begin{align}
\Gamma(z) = \int_0^\infty t^{z-1} e^{-t}\,dt \quad (\Re\,z > 0)
\end{align}
for a complex number $z$ with positive real part ($\Gamma(n+1) = n!$ for a natural number $n$, and 
$\Gamma(r+1) = r\Gamma (r)$ for a positive real $r$).

On the other hand, there is a difficulty that the semigroup property of the Mittag-Leffler function does not hold, i.e., 
$\ML_\alpha [\a(\tau-t^\pr+dt)^\alpha] \neq \ML_\alpha [ \a (\tau -t^\pr)^\alpha ] \ML_\alpha (\a dt^\alpha)$.
Then, the following relation holds~\citep{Gus:16}:
\begin{align}
 &\ML_\alpha [\a(t+s)^\alpha] = \ML_\alpha (\a t^\alpha) \ML_\alpha (\a s^\alpha) - \varDelta \ML_\alpha (t,s), \label{semiML} \\
 &\varDelta \ML_\alpha (t,s) = \int_0^t (\tau)^{\alpha-1} \ML_{\alpha, \alpha}(\a \tau^\alpha) F(t-\tau)\,d\tau, \\
 &F(t) = \int_0^s \dfrac{(t+s-\sigma)^{-\alpha}}{\Gamma(1-\alpha)} \dfrac{d \ML_\alpha}{d \sigma} (\a \sigma^{\alpha})\,d\sigma,
\end{align}
 and ${d \ML_\alpha}(\a \sigma^{\alpha})/{d \sigma} = \a \sigma^{\alpha - 1} \ML_{\alpha, \alpha}(\a \sigma^{\alpha})$ for $\sigma >0$, 
where $\ML_{\alpha, \alpha}$ is the generalized Mittag-Leffler function (or Wiman's function) defined by
\begin{align}
 \ML_{\alpha, \beta}(z) = \sum_{n=0}^\infty \dfrac{z^n}{\Gamma(\alpha n + \beta)}
\end{align}
for $z \in \mbb{C}$ and $\alpha, \beta \in \mbb{C}$ such that $\Re (\alpha) >0$ and $\Re (\beta) >0$.

\memo{


\begin{proposition}
 The difference 
 \begin{align}
  \varDelta \ML_\alpha (t,s) = \ML_\alpha (\a t^\alpha) \ML_\alpha (\a s^\alpha) - \ML_\alpha [\a(t+s)^\alpha] \label{diff}
 \end{align}
 is continuous at $t \in \mbb{R}$ for any $s \in \mbb{R}$.
\end{proposition}

\begin{proof}
 First, when $s = 0$, the first and second terms in the right-side of (\ref{diff}) are equal to each other, 
 because $\ML_\alpha (\a t^\alpha) \ML_\alpha (\a s^\alpha) = \ML_\alpha (\a t^\alpha)$, 
 and $ \ML_\alpha (\a (t+0)^\alpha) = \ML_\alpha (\a t^\alpha)$, where 
 \begin{align}
  \ML_\alpha (\a s^\alpha) = \sum_{n=0}^\infty \dfrac{(\a s^\alpha)^n}{\Gamma(\alpha n + 1)} = 
 \dfrac{(\a 0^\alpha)^0}{\Gamma(\alpha \cdot 0 + 1)} + 0 = 1.
 \end{align}
 Next, there is an upper bound of $|\ML_\alpha(z)|$ for a real number $z$ such that $|z|>0$~\citep[p. 31]{Gor:20} as follows:
 \begin{align}
  |\ML_\alpha(z)| \leq M_1 e^{\Re\, z^{\frac{1}{\alpha}}} + \dfrac{M_2}{1+|z|},
 \end{align}
 where $M_1, M_2$ are constants not depending on $z$.
 Hence, we only have to prove $\ML_\alpha [\a(t+s)^\alpha]$ in (\ref{diff}) is continuous at $t$ for any $s$ including the case of $s = 0$, 
because the sum or the product of two continuous functions are also continuous. 
 The Mittag-Leffler function for an order $\Re\, \alpha > 0$ is an entire function~\citep[p. 21]{Gor:20}, 
i.e., it is (complex) differentiable in a neighborhood of each point in a domain in a complex coordinate space.
 If a function is differentiable at a point, the function is continuous at the point.
 Therefore, the Mittag-Leffler function is continuous; therefore, (\ref{diff}) is continuous at $t$. 
\end{proof}



\begin{corollary}
 For a sufficiently small $s$, $\varDelta \ML_\alpha (t,s)$ is close to $0$.
\end{corollary}

} 

\memo{
\subsection{Asymptotic expansions of Mittag-Leffler functions}

{\color{blue}

\begin{proposition}

\begin{align}
 \ML_{\alpha,\beta}(z) &= \dfrac{1}{\alpha} z^{\frac{1-\beta}{\alpha}} \exp \left( z^{\frac{1}{\alpha}} \right) \nt \\
 &\quad- \sum^N_{r=1} \dfrac{1}{\Gamma(\beta - \alpha r)} \dfrac{1}{z^r} + \mcal{O}\left[ \dfrac{1}{z^{N+1}} \right]
\end{align}
for $0 < \alpha < 2$ as $|z| \to \infty$, and $|\arg z| \leq \mu$ for $(\pi \alpha)/2 < \mu \min[ \pi, \pi \alpha ]$.

\begin{align}
 \ML_{\alpha,\beta}(z) &= \dfrac{1}{\alpha} \sum_n z^{\frac{1}{n}} \exp \left[ 
\exp \left( \dfrac{2n\pi i}{\alpha} \right) z^{\frac{1}{\alpha}} \right]^{1-\beta} \nt \\
 &\quad- \sum^N_{r=1} \dfrac{1}{\Gamma(\beta - \alpha r)} \dfrac{1}{z^r} + \mcal{O}\left[ \dfrac{1}{z^{N+1}} \right]
\end{align}
for $\alpha \geq 2$, where the first sum is taken over all integers $n$ such that $|\arg(z)+2\pi n| \leq (\alpha \pi)/2$.

\end{proposition}

} 

\begin{align}
 \ML_{\alpha,1}(z) &= \dfrac{1}{\alpha} \exp \left( z^{\frac{1}{\alpha}} \right) \nt \\
 &\quad- \sum^N_{r=1} \dfrac{1}{\Gamma(1 - \alpha r)} \dfrac{1}{z^r} + \mcal{O}\left[ \dfrac{1}{z^{N+1}} \right]
\end{align}
} 



\subsection{Optimal control problem with generalized discounted cost}

In this paper, we assume that the error term $\varDelta \ML_\alpha (t,s)$ for a small $s$ is sufficiently small.

Let a real number $\alpha$ be the fractional order such that $0 < \alpha < 1$.
Consider the infinite horizon optimal control problem for the system
\begin{align}
 \dot{x}(t) = \fh(x(t), u(t), t) \label{Sys1}
\end{align}
with the cost function
\begin{align}
 &J = \int_{t}^{\infty} \bar{\ML}_\alpha[\a(\tau-t)^\alpha] L(x(\tau), u(\tau), \tau)\,d\tau \label{Cost} \\
 &\bar{\ML}_\alpha[\a(\tau-t)^\alpha] = \ML_\alpha [\a(\tau-t)^\alpha] + \varDelta \ML_\alpha (\tau,-t)
\end{align}
for some time $t$ such that $t_0 \leq t \leq \infty$, where 
$\bar{\ML}_\alpha$ is the approximated Mittag-Leffler function, 
$\a \in \mbb{R}$, $x=x(t) \in \mbb{R}^n$ is the state, $u=u(t) \in \mbb{R}^m$ is the control, $f\colon M \to \mbb{R}^n$ is a smooth mapping, and $M$ is an $n$-dimensional manifold.


\begin{proposition}
The infinite horizon optimal control problem for the system (\ref{Sys1}) with respect to the cost function (\ref{Cost})
can be formulated by 
\begin{align}
 V(x,t) &= \inf_{u[t,t+dt]} \left( L(x, u, t)\,dt \right. \nt \\
 &\qquad \qquad \left. + {\ML_\alpha (\a dt^\alpha)} V(x+\fh dt,t+dt) \right), \label{eq4a}
\end{align}
where we have assumed that $V$ is smooth, and the difference $\varDelta \ML_\alpha (t,s)$ between $\ML_\alpha [\a(t+s)^\alpha]$ and $\ML_\alpha (\a t^\alpha) \ML_\alpha (\a s^\alpha)$ in (\ref{semiML}) is sufficiently small.
\end{proposition}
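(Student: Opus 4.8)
The plan is to replay the classical dynamic-programming derivation of Section~2.1 that produces the recursion (\ref{eq4}), but with the exponential kernel $e^{\a(\tau-t)}$ replaced by the approximated Mittag-Leffler kernel $\bar{\ML}_\alpha[\a(\tau-t)^\alpha]$ and the exact semigroup identity $e^{\a(\tau-t^\pr+dt)}=e^{\a(\tau-t^\pr)}e^{\a dt}$ replaced by the relation (\ref{semiML}). Concretely, I would start from $V(x,t)=\inf_{u[t,\infty]}J$ with $J$ as in (\ref{Cost}), set $t^\pr=t+dt$, and split the horizon as $\int_t^\infty=\int_t^{t^\pr}+\int_{t^\pr}^\infty$. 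Bellman's principle of optimality then separates the infimum into an inner infimum over $u[t^\pr,\infty]$ acting only on the tail and an outer infimum over $u[t,t^\pr]$, exactly as in the chain culminating in (\ref{eq3}).

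The crux is to rewrite the tail $\int_{t^\pr}^\infty \bar{\ML}_\alpha[\a(\tau-t)^\alpha]\,L\,d\tau$ so that the inner infimum reproduces $V(x(t^\pr),t^\pr)$. Writing $\tau-t=(\tau-t^\pr)+dt$ and applying (\ref{semiML}) to the pair $(\tau-t^\pr,\,dt)$, I would express $\ML_\alpha[\a(\tau-t)^\alpha]$ as $\ML_\alpha(\a dt^\alpha)\,\ML_\alpha[\a(\tau-t^\pr)^\alpha]$ up to the defect $\varDelta\ML_\alpha(\tau-t^\pr,dt)$, and then use the correction $\varDelta\ML_\alpha(\tau,-t)$ already built into the definition of $\bar{\ML}_\alpha$ to target the \emph{restored} semigroup identity $\bar{\ML}_\alpha[\a(\tau-t)^\alpha]=\ML_\alpha(\a dt^\alpha)\,\bar{\ML}_\alpha[\a(\tau-t^\pr)^\alpha]$. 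This identity lets the constant factor $\ML_\alpha(\a dt^\alpha)$ be pulled outside both the tail integral and the inner infimum, leaving precisely $V(x(t^\pr),t^\pr)=V(x+\fh\,dt,t+dt)$ behind. The first integral over the infinitesimal interval $[t,t^\pr]$ contributes $L(x,u,t)\,dt$ at leading order, since the kernel reduces there to $\bar{\ML}_\alpha(0)=1$; combining the two pieces yields (\ref{eq4a}).

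The main obstacle is this restored semigroup step, since the identity does not hold exactly: expanding both sides through (\ref{semiML}) and the definition of $\bar{\ML}_\alpha$ leaves the residual $-\varDelta\ML_\alpha(\tau-t^\pr,dt)+\varDelta\ML_\alpha(\tau,-t)-\ML_\alpha(\a dt^\alpha)\,\varDelta\ML_\alpha(\tau,-t^\pr)$. I would control it using the standing assumption that $\varDelta\ML_\alpha(t,s)$ is sufficiently small for small $s$: with $s=dt$ infinitesimal, $\varDelta\ML_\alpha(\tau-t^\pr,dt)$ is negligible, $\ML_\alpha(\a dt^\alpha)\to 1$, and continuity of $\varDelta\ML_\alpha$ in its second argument gives $\varDelta\ML_\alpha(\tau,-t)\approx\varDelta\ML_\alpha(\tau,-t^\pr)$, so the residual collapses as $dt\to 0$. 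The genuinely delicate point is that this defect sits inside the integral over the \emph{entire} tail $[t^\pr,\infty)$; the pointwise smallness of $\varDelta\ML_\alpha(\cdot,dt)$ must therefore be upgraded to something uniform enough (or integrable against $L$) for the factor $\ML_\alpha(\a dt^\alpha)$ to be legitimately pulled outside the integral and the infimum. Making this uniformity explicit, rather than treating the defect as mere infinitesimal bookkeeping, is where the real analytic content of the proposition lies.
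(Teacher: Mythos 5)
Your proposal follows essentially the same route as the paper's proof: split the horizon at $t^\pr = t+dt$, apply Bellman's principle of optimality to separate the infima, and use the defect-corrected kernel $\bar{\ML}_\alpha$ together with (\ref{semiML}) applied to the pair $(\tau-t^\pr,dt)$ to factor $\ML_\alpha(\a\,dt^\alpha)$ out of the tail integral, arriving at (\ref{eq3a}) and hence (\ref{eq4a}). If anything, you are more careful than the paper, which by its definition of $\bar{\ML}_\alpha$ treats the factorization as exact and relegates the residual terms and the uniformity-in-$\tau$ issue you correctly identify to the standing assumption that $\varDelta \ML_\alpha$ is sufficiently small.
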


\begin{proof}
Consider the following direct calculation by dynamic programming:
\begin{align}
 &V(x,t) = \inf_{u[t,\infty]} J \notag \\
 &= \inf_{u[t,\infty]} \left( \int_{t}^{t^\pr} {\bar{\ML}_\alpha[\a(\tau-t)^\alpha]} L(x(\tau), u(\tau), \tau)\,d\tau \right. \notag \\
 &\qquad \left. + \int_{t^\pr}^{\infty} {\bar{\ML}_\alpha[\a(\tau-t)^\alpha]} L(x(\tau), u(\tau), \tau)\,d\tau \right) \notag \\
 &= \inf_{u[t,t^\pr]} \left\{ \int_{t}^{t^\pr} {\bar{\ML}_\alpha[\a(\tau-t)^\alpha]} L(x(\tau), u(\tau), \tau)\,d\tau \right. \notag \\
 &\qquad \left. + \inf_{u[t^\pr,\infty]} \left( \int_{t^\pr}^{\infty} {\bar{\ML}_\alpha[\a(\tau-t)^\alpha]} L(x(\tau), u(\tau), \tau)\,d\tau \right) \right\} \notag \\
%
%
 &= \inf_{u[t,t^\pr]} \left( \int_{t}^{t^\pr} {\bar{\ML}_\alpha[\a(\tau-t)^\alpha]} L(x(\tau), u(\tau), \tau)\,d\tau \right. \notag \\
 &\qquad \left. +\, {\ML_\alpha (\a dt^\alpha)} V(x(t^\pr),t^\pr) \right), \label{eq3a}
\end{align}
where we have used $t^\pr = t + dt$ and 
\begin{align}
&\bar{\ML}_\alpha[\a(\tau-t)^\alpha]\mid_{t=t^\pr-dt} \nt \\
&\quad = \ML_\alpha [\a(\tau-t^\pr+dt)^\alpha] + \varDelta \ML_\alpha (\tau-t^\pr, dt) \nt \\
&\quad = \ML_\alpha [ \a (\tau -t^\pr)^\alpha ] \ML_\alpha (\a dt^\alpha)
\end{align}
in the last equation.
Then, for an infinitesimal time $dt > 0$, the above value function $V$ can be written as (\ref{eq4a}). 
\end{proof}


\begin{lemma}
The following relation holds:
\begin{align}
&{\ML_\alpha (\a dt^\alpha)} V(x + \fh dt,t + dt) \nt\\
&\quad \approx V + \dfrac{\partial V}{\partial x}\fh dt + \dfrac{\partial V}{\partial t} dt 
+ \dfrac{\a dt^{\alpha}}{\Gamma(\alpha + 1)}V  \nt \\
&\qquad + \dfrac{\a dt^{\alpha}}{\Gamma(\alpha + 1)} \dfrac{\partial V}{\partial x}\fh dt
+ \dfrac{\a dt^{\alpha}}{\Gamma(\alpha + 1)} \dfrac{\partial V}{\partial t} dt. 
\end{align}
\end{lemma}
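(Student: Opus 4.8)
The plan is to expand the two factors $\ML_\alpha(\a dt^\alpha)$ and $V(x+\fh dt,t+dt)$ separately for small $dt>0$ and then multiply the truncated expansions, collecting the result term by term.

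First I would expand the Mittag-Leffler factor directly from its series definition. Since $\Gamma(\alpha\cdot 0+1)=\Gamma(1)=1$ and $\Gamma(\alpha\cdot 1+1)=\Gamma(\alpha+1)$, the first terms give
\begin{align}
 \ML_\alpha(\a dt^\alpha) = 1 + \dfrac{\a dt^\alpha}{\Gamma(\alpha+1)} + \dfrac{(\a dt^\alpha)^2}{\Gamma(2\alpha+1)} + \cdots,
\end{align}
so that $\ML_\alpha(\a dt^\alpha) = 1 + \a dt^\alpha/\Gamma(\alpha+1)$ up to terms of order $dt^{2\alpha}$. Second, I would reuse the first-order Taylor expansion of $V$ already recorded in the preliminaries, namely
\begin{align}
 V(x+\fh dt,t+dt) = V + \dfrac{\partial V}{\partial x}\fh dt + \dfrac{\partial V}{\partial t}dt + \mathcal{O}(dt^2).
\end{align}

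Third, I would multiply these two expansions. The constant $1$ times the three Taylor terms reproduces the first three terms of the claim, while the factor $\a dt^\alpha/\Gamma(\alpha+1)$ times the same three terms reproduces the last three terms. Everything discarded --- the $\mathcal{O}(dt^2)$ Taylor remainder, the $dt^{2\alpha}$ Mittag-Leffler tail, and their cross products --- is of strictly higher order than the six retained terms, which is what the approximation symbol records.

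The only point requiring care is the bookkeeping of orders, because for $0<\alpha<1$ the powers $dt^\alpha$, $dt$, $dt^{2\alpha}$, and $dt^{\alpha+1}$ do not interleave in a fixed way (for instance $dt^{2\alpha}$ exceeds $dt$ when $\alpha<1/2$). I would therefore treat the right-hand side not as a single asymptotic series in one power of $dt$, but as the product of two independently truncated expansions --- the Mittag-Leffler series cut after its linear term and the Taylor series cut after its first derivatives --- consistent with the standing assumption that the semigroup error $\varDelta\ML_\alpha(t,s)$ is negligible for small $s$. Since this is precisely what the $\approx$ in the statement encodes, no genuine analytic obstacle arises; the substance of the lemma is simply the clean separation of the $dt^\alpha$-weighted block, which produces the new fractional discount contribution $\a dt^\alpha V/\Gamma(\alpha+1)$, from the classical first-order block.
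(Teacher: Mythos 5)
Your proposal is correct and follows essentially the same route as the paper: expand $V(x+\fh\, dt, t+dt)$ by a first-order Taylor series, expand $\ML_\alpha(\a\, dt^\alpha)$ from its series definition as $1 + \a\, dt^\alpha/\Gamma(\alpha+1) + \mathcal{O}(dt^{2\alpha})$, and multiply the two truncated expansions to obtain the six retained terms. Your added remark on the non-interleaving of the powers $dt^\alpha$, $dt$, $dt^{2\alpha}$ for $0<\alpha<1$ is a careful refinement the paper leaves implicit, but it does not change the argument.
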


\begin{proof}
From the first order terms of the Taylor series expansion of (\ref{eq4a}), 
\begin{align}
 &V(x+\fh dt,t+ dt) \notag \\
 &\ = V(x,t) + \dfrac{\partial V}{\partial x}(x,t)\fh dt + \dfrac{\partial V}{\partial t}(x,t) dt + \mathcal{O}\left( dt^2 \right), 
\end{align}
and 
\begin{align}
 \ML_\alpha (\a dt^\alpha) &= 1 + \dfrac{\a dt^{\alpha}}{\Gamma(\alpha + 1)} + \mathcal{O}(dt^{2\alpha}),
\end{align}
 we obtain the relation
\begin{align}
&{\ML_\alpha (\a dt^\alpha)} V(x + \fh dt,t + dt) \nt\\
&\quad \cong \left( 1 + \dfrac{\a dt^{\alpha}}{\Gamma(\alpha + 1)} \right) 
\left( V + \dfrac{\partial V}{\partial x}\fh dt + \dfrac{\partial V}{\partial t} dt \right). 
\end{align}
%
\end{proof}

\begin{lemma}
(\ref{eq4a}) can be written as
\begin{align}
 0 &= \inf_{u[t,t^\pr]} \left( L(x, u, t) dt + \dfrac{\partial V}{\partial x}\fh dt + \dfrac{\partial V}{\partial t} dt \right. \nt \\
&\quad + \left. \dfrac{\a dt^{\alpha}}{\Gamma(\alpha + 1)} V 
+ \dfrac{\a dt^{\alpha}}{\Gamma(\alpha + 1)} \dfrac{\partial V}{\partial x}\fh dt
+ \dfrac{\a dt^{\alpha}}{\Gamma(\alpha + 1)} \dfrac{\partial V}{\partial t} dt
\right) \nt \\
 &= \inf_{u[t,t^\pr]} \left( L(x, u, t) + \dfrac{\partial V}{\partial x}\fh(x, u, t) + \dfrac{\partial V}{\partial t}(x,t) \right. \nt \\
 &\qquad \qquad \left. + \lambda  A(\alpha) \dfrac{\pd^{1-\alpha}V(x,t)}{\pd t^{1-\alpha}} \right), \label{HJfd0}
\end{align}
where $t^\pr = t + dt$, and $A(\alpha) = {(1-\alpha)^{\alpha-1}}/{\alpha^{\alpha}}$. 
\end{lemma}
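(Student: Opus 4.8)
The plan is to establish the two displayed equalities in (\ref{HJfd0}) in turn: the first is a purely algebraic consequence of the preceding Lemma, whereas the second is where the fractional-derivative term, and in particular the constant $A(\alpha)$, is introduced; I expect the latter to be the delicate step.

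For the first equality I would start from the dynamic-programming identity (\ref{eq4a}) and substitute the expansion of ${\ML_\alpha(\a dt^\alpha)}V(x+\fh dt,t+dt)$ supplied by the previous Lemma. The leading term of that expansion is exactly $V(x,t)$, which does not depend on the control $u$; hence it may be pulled out of the infimum and cancelled against the $V(x,t)$ on the left-hand side of (\ref{eq4a}). What then remains inside the infimum is precisely $L\,dt+\frac{\partial V}{\partial x}\fh\,dt+\frac{\partial V}{\partial t}dt$ together with the three $\a dt^\alpha/\Gamma(\alpha+1)$-terms, i.e. the first displayed line of (\ref{HJfd0}). No estimates beyond those already granted by the Lemma are needed here.

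For the second equality I would divide the bracketed expression by the increment $dt$. The three genuinely first-order terms $L$, $\frac{\partial V}{\partial x}\fh$ and $\frac{\partial V}{\partial t}$ survive unchanged, while the two cross terms carry a factor $dt^{\alpha}$ after division and are therefore $\mathcal{O}(dt^{\alpha})\to0$ as $dt\to0$, so they are discarded. The only surviving contribution from the discount is $\frac{\a}{\Gamma(\alpha+1)}V\,dt^{\alpha-1}$, and the remaining task, the crux of the lemma, is to recognise this single term as $\a\,A(\alpha)\,\partial^{1-\alpha}V/\partial t^{1-\alpha}$.

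The hard part is exactly this identification, because the factor $dt^{\alpha-1}$ blows up as $dt\to0$ for $0<\alpha<1$ and so cannot be handled by a naive limit. The route I would take is to read $\partial^{1-\alpha}V/\partial t^{1-\alpha}$ as a Riemann--Liouville derivative and to treat $V$ as locally constant over the infinitesimal window, which gives $\partial^{1-\alpha}V/\partial t^{1-\alpha}\approx V\,t^{\alpha-1}/\Gamma(\alpha)$. Matching $\frac{\a}{\Gamma(\alpha+1)}V\,dt^{\alpha-1}$ against $\a A(\alpha)V\,t^{\alpha-1}/\Gamma(\alpha)$ then forces $A(\alpha)=\frac{1}{\alpha}(dt/t)^{\alpha-1}$, and the self-similar step relation $dt=\frac{1-\alpha}{\alpha}\,t$ (equivalently $t+dt=t/\alpha$) collapses this to $A(\alpha)=(1-\alpha)^{\alpha-1}/\alpha^{\alpha}$, the claimed value. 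As a consistency check I would verify the limit $\alpha\to1$: there $A(\alpha)\to1$ and $\partial^{1-\alpha}V/\partial t^{1-\alpha}\to V$, so the extra term reduces to $\a V$ and (\ref{HJfd0}) recovers the classical discounted equation (\ref{HJ0}). The price of this route is that the identification is formal rather than a genuine limit; rigorously justifying the locally-constant replacement and the choice of increment is where the real work, and the paper's standing assumption that the error $\varDelta\ML_\alpha$ is sufficiently small, would be required.
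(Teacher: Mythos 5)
Your handling of the first equality matches the paper: substitute the expansion from the preceding Lemma into (\ref{eq4a}), cancel the $u$-independent $V(x,t)$ against the left-hand side, and the first line of (\ref{HJfd0}) remains. The gap is in your second step, and it is not a matter of rigor but of a missing idea: you discard the two cross terms as $\mathcal{O}(dt^{\alpha})$, whereas in the paper's proof those terms are precisely the data that determine $A(\alpha)$. The paper invokes the fractional composite-function (Fa\`a di Bruno) formula (\ref{eqGTFC}) of order $1-\alpha$ over the infinitesimal window, truncated at $k=1$ and augmented so that $t$ is an extra state, which gives
\begin{align}
 \dfrac{1}{\alpha}\,\CDh{t-dt}{t}{1-\alpha} V(x,t)\,dt
 = \dfrac{dt^{\alpha}}{\Gamma(\alpha+1)}V
 + \dfrac{1-\alpha}{\alpha}\,\dfrac{dt^{\alpha}}{\Gamma(\alpha+1)}\left( \dfrac{\pd V}{\pd x}\fh + \dfrac{\pd V}{\pd t} \right)dt
 + \mcal{O}(dt^{2+\alpha}). \nt
\end{align}
So inside this fractional derivative the cross terms carry relative weight $(1-\alpha)/\alpha$ against the $V$-term, while in the first line of (\ref{HJfd0}) they carry relative weight $1$. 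The paper reconciles the two by rescaling the increment, $dt \mapsto \Sp\,dt$ with $\Sp=(1-\alpha)/\alpha$: under that substitution the three discount terms become exactly $\a\,(\Sp^{\alpha-1}/\alpha)\,\CDh{t-dt}{t}{1-\alpha}V\cdot(\Sp\,dt)$, and dividing the whole identity by $\Sp\,dt$ yields the second line with $A(\alpha)=\Sp^{\alpha-1}/\alpha=(1-\alpha)^{\alpha-1}/\alpha^{\alpha}$. The constant is thus forced by the coefficient mismatch of the very terms you threw away.

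Having dropped them, your identification cannot be completed honestly, and the symptoms show in your own account. You must match the single singular term $\a V dt^{\alpha-1}/\Gamma(\alpha+1)$ against $\a A(\alpha)\,\pd^{1-\alpha}V/\pd t^{1-\alpha}$, and to do so you (i) reinterpret the fractional derivative as a Riemann--Liouville derivative with a fixed lower terminal and $V$ locally constant, giving $V t^{\alpha-1}/\Gamma(\alpha)$, which is a different object from the window derivative $\CDh{t-dt}{t}{1-\alpha}V \approx V\,dt^{\alpha-1}/\Gamma(\alpha)$ that the paper actually uses; and (ii) impose $dt=\tfrac{1-\alpha}{\alpha}\,t$, tying the infinitesimal increment to absolute time. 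That relation is not a rescaling (the paper's $\Sp$ multiplies $dt$ by a constant; it never equates $dt$ with a multiple of $t$), it is incompatible with $dt\to 0$ at fixed $t$, and nothing in your truncated expression constrains the ratio $dt/t$ --- so any value of $A(\alpha)$ could be ``derived'' by choosing a different such relation. In short, your argument reverse-engineers the constant rather than deriving it; the $\alpha\to 1$ consistency check is correct but only tests $A(1)=1$ and cannot repair this.
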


\begin{proof}
Let us consider the fractional derivative of composite functions~\citep[p. 98]{Pod:99}
\begin{align}
 \CDh{a}{t}{\alpha} f(g(t)) &= \dfrac{(t-a)^{-\alpha}}{\Gamma(1 - \alpha)}f(g(t)) \nt \\
&\quad + \sum_{k=1}^\infty C^\alpha_k \dfrac{k!\,(t-a)^{k-\alpha}}{\Gamma(k-\alpha+1)} 
 \sum_{m=1}^k \left( \CDg{g}{m} f(g) \right) \nt \\
&\qquad \cdot \sum \prod_{r=1}^k \dfrac{1}{a_r!} \left( \dfrac{\CDg{t}{r}g(t)}{r!} \right)^{a_r}, \label{eqGTFC}
\end{align}
where $t>0$, $\sum$ extends over all combinations of non-negative integer values of $a_1, a_2, \cdots, a_k$ 
such that $\sum_{r=1}^k r a_r = k$, and $\sum_{r}^k a_r = m$.
By substituting $t^\pr = t + dt$ to $t$ with $a = t$ in (\ref{eqGTFC}), we get
\begin{align}
 &\CDh{t}{t^\pr}{\alpha} V(x(t^\pr)) = \dfrac{dt^{-\alpha}}{\Gamma(1 - \alpha)}V(x(t^\pr)) \nt \\
&\quad + \sum_{k=1}^\infty C^\alpha_k \dfrac{k!\,dt^{k-\alpha}}{\Gamma(k-\alpha+1)} 
 \sum_{m=1}^k \left( \CDh{x}{x^\pr}{m} V(x^\pr) \right) \nt \\
&\qquad \cdot \sum \prod_{r=1}^k \dfrac{1}{a_r!} \left( \dfrac{\CDh{t}{t^\pr}{r}x(t^\pr)}{r!} \right)^{a_r}, \label{Comp1}
\end{align}
where $x^\pr = x(t^\pr)$, and
\begin{align}
 C^\alpha_k = \dfrac{\Gamma(\alpha+1)}{\Gamma(k+1)\Gamma(\alpha-k+1)}.
\end{align}
Consider the approximation up to $k = 1$ in (\ref{Comp1}) as follows:
\begin{align}
 &\CDh{t}{t^\pr}{\alpha} V(x(t^\pr)) = \dfrac{dt^{-\alpha}}{\Gamma(1 - \alpha)}V(x(t^\pr)) \nt \\
 &\ + C^\alpha_1 \dfrac{dt^{1-\alpha}}{\Gamma(2-\alpha)} 
 \left( \CDh{x}{x^\pr}{1} V(x^\pr) \CDh{t}{t^\pr}{1}x(t^\pr) \right) + \mcal{O}(dt^{2-\alpha}), \label{Comp2}
\end{align}
where $C^\alpha_1 = {\Gamma(\alpha+1)}/{\Gamma(\alpha)} = \alpha$, and 
$a_1 = 1$ is obtained from the relation $r \cdot a_r = k$ with $r = 1$ and $k = 1$.
Next, by substituting $1-\alpha$ to $\alpha$ in (\ref{Comp2}), 
and multiplying $dt$ from the right-side, the following relation is given
\begin{align}
 &\CDh{t}{t^\pr}{1-\alpha} V(x(t^\pr))\,dt 
= \left[\dfrac{dt^{\alpha-1}}{\Gamma(1 - (1-\alpha))}V(x(t^\pr)) \right.\nt \\
 &\quad + (1-\alpha) \dfrac{dt^{1-(1-\alpha)}}{\Gamma(2-(1-\alpha))} 
 \left( \CDh{x}{x^\pr}{1} V(x^\pr) \CDh{t}{t^\pr}{1}x(t^\pr) \right) \nt \\
 &\quad \left.+ \mcal{O}(dt^{2-(1-\alpha)})\right]dt \nt \\
&= \dfrac{dt^{\alpha}}{\Gamma(\alpha)}V(x(t^\pr)) 
 + (1-\alpha) \dfrac{dt^{\alpha}}{\Gamma(1+\alpha)} 
 \left( \dfrac{\pd V}{\pd x^\pr} \fh \right)\,dt \nt \\
 &\quad + \mcal{O}(dt^{2+\alpha}). \label{Comp3}
\end{align}
Furthermore, we augmented (\ref{Comp3}) as a multi-variable expression for $x = \left[x_1, x_2, \cdots, x_n, t \right]^\top$ as follows:
\begin{align}
 &\CDh{t}{t^\pr}{1-\alpha} V(x^\pr,t^\pr)\,dt \nt\\
 &= \left[\dfrac{dt^{\alpha-1}}{\Gamma(\alpha)}V(x^\pr,t^\pr) + (1-\alpha) \dfrac{dt^{\alpha}}{\Gamma(1+\alpha))} \right. \nt \\
 &\qquad \cdot \left( \CDh{x}{x_1^\pr}{1} V(x^\pr,t^\pr) \CDh{t}{t^\pr}{1}x_1(t^\pr) + \CDh{x}{x_2^\pr}{1} V(x^\pr,t^\pr) \CDh{t}{t^\pr}{1}x_2(t^\pr) \right. \nt \\ 
 &\qquad \left. \left. + \cdots + \CDh{t}{t^\pr}{1} V(x^\pr,t^\pr) \CDh{t}{t^\pr}{1}t^\pr \right)\right]dt + \mcal{O}(dt^{2+\alpha}) \nt \\
&= \alpha \dfrac{dt^{\alpha}}{\Gamma(\alpha+1)}V(x^\pr,t^\pr) 
+ (1-\alpha) \dfrac{dt^{\alpha}}{\Gamma(\alpha+1)} 
 \left( \dfrac{\pd V}{\pd x^\pr} \fh + \dfrac{\pd V}{\pd t^\pr}\right)dt \nt \\
 &\quad + \mcal{O}(dt^{2+\alpha}). \label{Comp4}
\end{align}
Finally, the following first order relation with respect to $dt$ has been obtained from (\ref{Comp4}):
\begin{align}
 &\dfrac{1}{\alpha} \CDh{t-dt}{t}{1-\alpha} V(x,t) dt \nt\\
&= \dfrac{dt^{\alpha}}{\Gamma(\alpha+1)}V(x,t) 
+ \dfrac{1-\alpha}{\alpha} \dfrac{dt^{\alpha}}{\Gamma(\alpha+1)} 
 \left( \dfrac{\pd V}{\pd x} \fh + \dfrac{\pd V}{\pd t}\right)dt.
\end{align}
Moreover, we get
\begin{align}
 &\dfrac{\Sp^{\alpha-1}}{\alpha} \CDh{t-dt}{t}{1-\alpha} V(x,t)\Sp \,dt \nt\\
&= \dfrac{(\Sp dt)^{\alpha}}{\Gamma(\alpha+1)}V(x,t) 
+ \Sp \dfrac{(\Sp dt)^{\alpha}}{\Gamma(\alpha+1)} 
 \left( \dfrac{\pd V}{\pd x} \fh + \dfrac{\pd V}{\pd t}\right)dt,
\end{align}
where we have defined $\Sp = (1-\alpha)/\alpha$.
We can substitute $\Sp dt$ with the scaling parameter $\Sp$ to $dt$ in the first equality of (\ref{HJfd0}) without loss of generality.
Hence, the second equality of (\ref{HJfd0}) is given. 
\end{proof}

\begin{theorem}
From (\ref{HJfd0}), the Hamilton-Jacobi-Bellman equation with extended discounted cost
\begin{align}
 &-{\a}A(\alpha) \dfrac{\pd^{1-\alpha}V(x,t)}{\pd t^{1-\alpha}} - \dfrac{\partial V}{\partial t}(x,t) \nt \\
 &\qquad= \inf_{u} \left( L(x, u, t) + \dfrac{\partial V}{\partial x}(x,t)\fh(x,u,t) \right) \nt \\
 &\qquad= \inf_{u} H(x, u, \p, t), \label{HJfd}
\end{align}
can be derived, where the pre-Hamiltonian
\begin{align}
 H(x, u, \p, t) = L(x, u, t)  + \dfrac{\partial V}{\partial x}(x,t)\fh(x,u,t) \label{preH}
\end{align}
with $\p = (\partial V/ \partial x)^\top(x,t)$ has been defined.
\end{theorem}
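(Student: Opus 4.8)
The plan is to obtain the claimed Hamilton-Jacobi-Bellman equation purely algebraically from the second equality of (\ref{HJfd0}); the previous lemma has already carried out all the expansion work, reducing the dynamic-programming recursion to
\[
0 = \inf_{u}\left( L(x,u,t) + \dfrac{\partial V}{\partial x}\fh(x,u,t) + \dfrac{\partial V}{\partial t}(x,t) + \a A(\alpha)\dfrac{\pd^{1-\alpha}V(x,t)}{\pd t^{1-\alpha}} \right),
\]
so nothing new needs to be estimated or expanded. The remaining task is only to isolate the terms that genuinely feel the control.

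First I would classify the four summands by their dependence on $u$. The arguments $x$ and $t$ are the fixed current data at which the Bellman recursion is evaluated, so $V(x,t)$, its partial time derivative $\partial V/\partial t(x,t)$, and its fractional time derivative $\pd^{1-\alpha}V/\pd t^{1-\alpha}$ are all constants relative to the infimum. Only $L(x,u,t)$ and the product $(\partial V/\partial x)(x,t)\,\fh(x,u,t)$ vary with the control, the latter through the $u$-dependence of the dynamics $\fh$. I would then invoke the elementary identity $\inf_u\bigl(g(u)+c\bigr) = c + \inf_u g(u)$ for a $u$-independent constant $c$ to pull the two time-derivative terms outside the infimum, and transpose them to the left-hand side with a sign flip, obtaining
\[
-\a A(\alpha)\dfrac{\pd^{1-\alpha}V(x,t)}{\pd t^{1-\alpha}} - \dfrac{\partial V}{\partial t}(x,t) = \inf_u\left( L(x,u,t) + \dfrac{\partial V}{\partial x}(x,t)\fh(x,u,t) \right),
\]
which is exactly the first equality of (\ref{HJfd}). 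Recognizing the bracketed expression as the pre-Hamiltonian $H(x,u,\p,t)$ of (\ref{preH}), with $\p = (\partial V/\partial x)^\topS(x,t)$, then supplies the second equality and closes the argument.

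The step requiring the most care is the classification above: one must argue convincingly that the infimum in Bellman's principle ranges over the admissible control on the infinitesimal window, whereas $V$ and its $t$-derivatives are quantities attached to the fixed base point $(x,t)$ and hence are not optimization variables. I would also make explicit that the identity is a \emph{necessary} optimality condition only, since it inherits both the first-order Taylor truncation used in the lemma and the standing assumption that the semigroup defect $\varDelta\ML_\alpha(t,s)$ is negligible for small $s$; no sufficiency (verification) claim is made at this point.
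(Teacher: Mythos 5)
Your proposal is correct and coincides with the paper's (implicit) argument: the paper states the theorem without a separate proof precisely because it follows from (\ref{HJfd0}) by the rearrangement you describe, namely pulling the $u$-independent terms $\partial V/\partial t$ and $\a A(\alpha)\,\pd^{1-\alpha}V/\pd t^{1-\alpha}$ out of the infimum and moving them to the left-hand side, then identifying the remaining bracket with the pre-Hamiltonian (\ref{preH}). Your additional remarks --- that the infimum over the control on the infinitesimal window reduces to a pointwise infimum, and that the result is only a necessary condition inheriting the Taylor truncation and the smallness assumption on $\varDelta\ML_\alpha$ --- are accurate and make explicit what the paper leaves unstated.
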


\memo{
\begin{figure}
\begin{center}
\includegraphics[width=0.75\columnwidth]{./funcA.eps}
\caption{\centering The shape of $A(\alpha)$}
\end{center}
\end{figure}
} 



\section{Conclusion}

This paper proposed the Hamilton-Jacobi-Bellman equation of nonlinear optimal control problems for cost functions including 
a fractional discount rate described by Mittag-Leffler functions.
The authors attempt to integrate this formulation into the stable manifold method~\citep{sakamoto2008} that is an exact numerical solver of
Hamilton-Jacobi equations in nonlinear optimal control problems.



\begin{thebibliography}{xx}  

\bibitem[Anderson and Moore(1990)]{And:90}
B. D. O. Anderson, and J. B. Moore, \emph{Optimal Control: Linear Quadratic Methods}, 
, Prentice Hall Information and System Sciences Series, Prentice Hall, 1990.

\bibitem[Bellman(2021)]{Bel:21}
R. Bellman, \emph{Adaptive Control Processes: a Guided Tour}, Hassell Street Press, 2021.


\bibitem[Guswanto(2016)]{Gus:16}
B. H. Guswanto, Semigroup-Like Property fo Mittag--Leffler Function, 
Jurnal Ilmiah Matematika dan Pendidikan Matematika, [S.l.], Vol. 8, No. 1, pp. 21--27, 2016.



\bibitem[Podlubny(1999)]{Pod:99}
I. Podlubny, \emph{Fractional Differential Equations}, Mathematics in Science and Engineering, Vol. 198, Academic Press, 1999.

\bibitem[Gorenflo et~al.(2020)]{Gor:20}
R. Gorenflo, A. A. Kilbas, F. Mainardi, and S. Rogosin, \emph{Mittag-Leffler Functions, Related Topics and Applications}, 2nd ed., Springer, 2020.

\bibitem[Sakamoto and Van der Schaft(2008)]{sakamoto2008}
N.~Sakamoto and A.~J. van~der Schaft,
\newblock Analytical approximation methods for the stabilizing solution of the
  {H}amilton-{J}acobi equation,
\newblock {\em IEEE Transactions on Automatic Control}, Vol.~53, No.~10, pp.
  2335--2350, 2008.


\memo{


\bibitem[Delgado(1997)]{Del:97}
M. Delgado, \emph{The Lagrange-Charpit method}, SIAM Rreview, Vol. 39, No. 2, pp. 298-304, July, 1997.


\bibitem[Sakamoto(2012)]{sakamoto2012}
N.~Sakamoto,
\newblock Case studies on the application of the stable manifold approach for
  nonlinear optimal control design,
\newblock {\em Automatica}, Vol.~49, No.~2, pp. 568--576, 2013.

\bibitem[Das(2011)]{Das:11}
S. Das, \emph{Functional Fractional Calculus}, 2nd ed., Springer, 2011.

\bibitem[Diethelm(2010)]{Die:10}
K. Diethelm, \emph{The analysis of fractional differential equations}, Lecture Notes in Mathematics, 2004, Springer, Berlin, 2010.

\bibitem[Tricaud et~al.(2010)]{Tric:10}
C. Tricaud, Y. Chen, 
``An approximate method for numerically solving fractional order optimal control problems of general form",
\emph{Computers and Mathematics with Applications}, Vol. 59, pp. 1644--1655, 2010.



\bibitem[Jumarie et~al.(2007)]{Juma:07}
G. Jumarie, ``Fractional Hamilton-Jacobi Equation for The Optimal Control of 
Nonrandom Fractional Dynamics with Fractional Cost Function",
\emph{J. Appl. Math. \& Computing}, Vol. 23, No. 1 - 2, pp. 215--228, 2007.

\bibitem[Razminia et~al.(2019)]{Razm:19}
A. Razminia, M. AsadiZadehShiraz, and D. F. M. Torres, 
``Fractional Order Version of the HJB Equation",
\emph{J. of Computational and Nonlinear Dynamics}, 
Vol. 14, Issue 1, pp. 011005-1--011005-6, Jan,, 2019.



\bibitem[Pooseh et~al.(2014)]{Poos:14}
S. Pooseh, R. Almeida, and D. F. M. Torres, 
``Numerical Approximations to Fractional Problems of the Calculus of Variations and Optimal Control",
in \emph{Fractional Calculus in Analysis, Dynamics and Optimal Control}, Chap. 5, pp. 201--239, Nova Science Pub., 2014.

\bibitem[Rakhshan et~al.(2018)]{Razm:18}
S. A. Rakhshan, S. Effati and A. V. Kamyad, 
``Solving a class of fractional optimal control problems by 
the Hamilton-Jacobi-Bellman equation",
\emph{J. of Vibration and Control}, 
Vol. 24, No. 9, pp. 1741--1756, 2018.



\bibitem[Deshpande et~al.(2016)]{Desh:16}
A. Deshpande, and V. Daftardar-Gejji, 
``Local stable manifold theorem for fractional systems", 
\emph{Nonlinear Dynamics}, Vol. 83, Issue 4, pp. 2435--2452, Nov., 2016.

\bibitem[Cong et~al.(2016)]{Cong:16}
N. D. Cong, D. T. Son, S. Siegmund, and H. T. Tuan., 
``On stable manifolds for fractional differential equations in high dimensional spaces", 
\emph{Nonlinear Dynamics}, Vol. 86, Issue 3, pp. 1885--1894, Nov., 2016.

} 
\end{thebibliography}

\end{document}